\documentclass[12pt]{amsart}

\usepackage{geometry}                
\geometry{letterpaper}                   
\parskip = 2mm

\usepackage{graphicx}  
\usepackage{subfig}
\usepackage{amssymb}
\usepackage{color}
\DeclareGraphicsRule{.tif}{png}{.png}{`convert #1 `dirname #1`/`basename #1 .tif`.png}

\newtheorem{theorem}{Theorem}[section]    
\newtheorem{proposition}[theorem]{Proposition}

\newtheorem{corollary}[theorem]{Corollary} 

%
\theoremstyle{definition}

\newtheorem{remark-number}[theorem]{Remark}

\newtheorem{remark}{Remark}             
\numberwithin{equation}{section}

\newcommand{\R}{\mathbb{R}}
\newcommand{\Q}{\mathbb{Q}}
\newcommand{\Z}{\mathbb{Z}}

\newcommand{\Mod}{{\rm Mod}}


\title{On a question of Etnyre and Van Horn-Morris}
\author{Tetsuya Ito}
\address{Department of Mathematics, Graduate School of Science, Osaka University \\ 1-1 Machikaneyama Toyonaka, Osaka 560-0043, JAPAN}
\email{tetito@math.sci.osaka-u.ac.jp}
\urladdr{http://www.math.sci.osaka-u.ac.jp/~tetito/}
\author{Keiko Kawamuro}
\address{Department of Mathematics,   
The University of Iowa, Iowa City, IA 52242, USA}
\email{keiko-kawamuro@uiowa.edu}
\date{\today}

\begin{document}

\begin{abstract}

We answer Question 6.12 in \cite{EV} asked by Etnyre and Van Horn-Morris.  
\end{abstract}

\maketitle

\section{Introduction}

Let $S$ be a compact oriented surface with boundary. 
Let $\Mod(S)$ denote the mapping class group of $S$, namely the group of isotopy classes of orientation preserving homeomorphisms of $S$ that fix the boundary $\partial S$ pointwise. 
Let $C$ be a boundary component of $S$ and let $$c(-,C):\Mod(S) \rightarrow \Q$$ denote the \emph{fractional Dehn twist coefficient $($FDTC}) of $\phi \in \Mod(S)$ with respect $C$.
See Honda, Kazez and Mati\'c's paper \cite{HKM} for the definition of the FDTC. 
For $r \in \R$ we define the following sets (\cite[p.344]{EV}):  
$$
FDTC_{r,C}(S):= \left\{ 
\phi \in \Mod(S) \: \middle| \: 
\phi = id_S \mbox{ or } 
c(\phi,C) \geq r 
\right\}
$$
\[ 
FDTC_{r}(S) :=
\left\{ \phi \in \Mod(S)\: \middle| \: 
\begin{array}{ll}
\phi = id_S \mbox{ or } \\
c(\phi,C) \geq r \mbox{ for all boundary components } C\subset \partial S\end{array}
\right\}
\]

The following theorem answers \cite[Question 6.12]{EV} of Etnyre and Van Horn-Morris: {\it For which $r\in \R$ the set $FDTC_{r}(S)$ forms a monoid?}

\begin{theorem}\label{thm:r}
Let $S$ be a surface that is not a pair of pants and has negative Euler characteristic. 
Let $C$ be a boundary component of $S$. 
The set $FDTC_{r, C}(S)$ $($and hence $FDTC_{r}(S))$ is a monoid if and only if $r>0$.
\end{theorem}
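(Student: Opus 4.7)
The plan is to prove the two directions of the biconditional separately, with the ``only if'' direction further split into the cases $r<0$ and $r=0$.

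For the ``if'' direction ($r>0$): I would aim to establish the super-additivity
\[
c(\phi\psi, C) \geq c(\phi, C) + c(\psi, C)
\]
whenever $c(\phi, C), c(\psi, C) > 0$; this immediately gives the monoid property, since then $c(\phi\psi, C) \geq 2r \geq r$. This inequality is strictly stronger than the generic quasimorphism bound $|c(\phi\psi, C) - c(\phi, C) - c(\psi, C)| \leq 1$, and I would approach it via the interpretation of the FDTC as the translation number of the canonical Nielsen--Thurston lift acting on a lift $\widetilde C$ of $C$ in the universal cover of $S$: when both translation numbers are strictly positive, rigidity of these canonical lifts should avoid the usual defect-$1$ slack. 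I expect this step to be the main obstacle, since it is genuinely stronger than the general quasimorphism inequality and relies on the specific structure of FDTC for mapping classes.

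For the ``only if'' direction with $r<0$: use the homogeneity $c(\phi^n, C) = n\, c(\phi, C)$. Since $r<0$ the interval $(r, r/2)$ is nonempty; pick a rational $r'$ in it and find $\phi \in \Mod(S)$ with $c(\phi, C) = r'$ (such $\phi$ exist on any surface with $\chi(S)<0$ that is not a pair of pants, e.g.\ via periodic homeomorphisms of branched covers of an annular collar of $C$ composed with mapping classes supported away from $C$). Then $\phi \in FDTC_{r, C}(S)$ since $r' > r$, while $c(\phi^2, C) = 2r' < r$ and $\phi^2 \neq id_S$ because $2r' \neq 0$, so $\phi^2 \notin FDTC_{r, C}(S)$.

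For the remaining case $r=0$: construct non-identity $\phi, \psi$ with $c(\phi, C), c(\psi, C) \geq 0$ whose product is non-identity and has $c(\phi\psi, C) < 0$. Such a pair realizes the full defect of $1$ in the FDTC quasimorphism. Concrete examples can be built on any non-pants $S$ with $\chi(S)<0$ by composing carefully chosen mapping classes so that their canonical lifts to $\widetilde C$ combine to translate strictly leftward; the exclusion of the pair of pants is essential here, since on a pair of pants the FDTC is a genuine homomorphism from an abelian mapping class group to $\Z$ and $FDTC_{0, C}$ would automatically be a monoid.
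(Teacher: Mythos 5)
Your ``if'' direction rests entirely on the proposed super-additivity $c(\phi\psi,C)\geq c(\phi,C)+c(\psi,C)$ for $c(\phi,C),c(\psi,C)>0$, which you flag as the main obstacle and do not prove; since this is the whole content of that direction, the proof is not complete as it stands. Moreover, super-additivity is strictly stronger than what is needed, and it is false at the level of generality at which you propose to obtain it (translation numbers of lifts): take $G\in\widetilde{\textrm{Homeo}^{+}}(S^{1})$ a lift of the rotation by a small $\varepsilon>0$, and $H$ a lift with $\tau(H)=1/2$ whose underlying circle map has a hyperbolic (attracting/repelling) periodic orbit; by the plateau phenomenon for rotation numbers, $\tau(GH)=1/2<\tau(G)+\tau(H)$. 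So any ``rigidity of canonical lifts'' argument would have to invoke special features of the image of $\Mod(S)$ in $\widetilde{\textrm{Homeo}^{+}}(S^{1})$, none of which you identify, and it is not clear such a statement is even true for the FDTC. The paper proves exactly the weaker statement that is needed ($c(\phi,C),c(\psi,C)\geq r>0$ implies $c(\phi\psi,C)\geq r$) by a short quasi-morphism argument (Theorem~\ref{theorem:qmonoid}): writing $c(-,C)=\tau\circ\Theta_{C}$ for a homomorphism $\Theta_{C}$ \cite[Theorem 4.16]{IK2}, one assumes $q(gh)<r\leq q(g),q(h)$, chooses $n$ with $n(q(g)-q(gh))>D(q)$, deduces $q(g^{-n}(gh)^{n})<0$ from homogeneity and the defect, and contradicts the fact that $\tau(H)>0$ forces $H(x)>x$ for all $x$, hence $(GH)^{n}(0)>G^{n}(0)$. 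No super-additivity is involved.

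In the ``only if'' direction there are also genuine gaps. For $r<0$ you need a mapping class with $c(\phi,C)=r'$ for some rational $r'\in(r,r/2)$, and this realization step fails when $|r|$ is small: on a fixed surface $S$ the FDTC takes values in rationals whose denominators are bounded in terms of the topology of $S$ (the order of a periodic representative, or the number of prongs at $C$ of the piece adjacent to $C$ in the Nielsen--Thurston decomposition), so the nonzero values are bounded away from $0$ and the interval $(r,r/2)$ then contains no realizable value; the suggested ``periodic homeomorphisms of branched covers of an annular collar'' cannot produce such values either. For $r=0$ you only assert that non-identity $\phi,\psi$ with nonnegative coefficients and negative product exist, with no construction, and again that assertion is the entire content of the case. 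The paper handles all $r\leq 0$ simultaneously with explicit coefficient-zero elements: for genus at least $2$, Dehn twists about non-separating curves generate $\Mod(S)$, so if $FDTC_{r,C}(S)$ were a monoid it would be all of $\Mod(S)$; in the planar and genus-one cases the lantern relation gives $(T_{x}T_{y}T_{z})^{-n}=T_{a}^{-n}T_{b}^{-n}T_{c}^{-n}T_{d}^{-n}$, of coefficient $-n$, even though each of $T_{x}^{-1},T_{y}^{-1},T_{z}^{-1}$ has coefficient $0$, and the $3$-chain relation plays the same role when $g=1$ and $|\partial S|\geq 2$. Any repair of your argument will need such products of coefficient-$\geq r$ elements with very negative coefficient, which is exactly what these relations supply. (Your side remark about the pair of pants is correct.)
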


\begin{remark}
If $S$ is a pair of pants then $FDTC_{r,C}(S)$ is a monoid if and only if $r \geq 0$.
\end{remark}

Theorem~\ref{thm:r} states that $FDTC_0 (S)$ is not a monoid. But $FDTC_0 (S)$ contains $Veer^+(S)$, the monoid of \emph{right-veering mapping classes}.

\begin{corollary}\label{cor}
We have 
$$\bigcup_{r>0} FDTC_r(S) \subsetneq Veer^+(S) \subsetneq FDTC_0 (S).$$
\end{corollary}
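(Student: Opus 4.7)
The plan is to reduce each of the two strict inclusions to a non-strict inclusion coming from Honda--Kazez--Mati\'c theory plus an explicit witness provided by an interior Dehn twist.

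First I would establish $\bigcup_{r>0} FDTC_r(S) \subseteq Veer^+(S) \subseteq FDTC_0(S)$ by quoting two standard results of \cite{HKM}: if $c(\phi, C) > 0$ then $\phi$ is right-veering on $C$, and if $\phi$ is right-veering on $C$ then $c(\phi, C) \geq 0$. Applying each to every boundary component of $S$ (the identity case is vacuous on both sides) yields the two non-strict inclusions.

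Next I would use the hypothesis that $S$ has negative Euler characteristic and is not a pair of pants to select a non-boundary-parallel essential simple closed curve $\alpha$ in the interior of $S$, together with a properly embedded arc $\beta$ based on $\partial S$ with $i(\alpha,\beta)>0$. The positive Dehn twist $T_\alpha$ is right-veering (a standard fact), while $c(T_\alpha, C) = 0$ for every boundary component $C$ since $\alpha$ lies in the interior of $S$; this places $T_\alpha$ in $Veer^+(S) \setminus \bigcup_{r>0} FDTC_r(S)$ and proves the first strict inclusion. For the second strict inclusion I would consider $T_\alpha^{-1}$, which again satisfies $c(T_\alpha^{-1}, C)=0$ for all $C$ and hence lies in $FDTC_0(S)$; by the companion fact that a negative Dehn twist is left-veering on every arc it moves, $T_\alpha^{-1}(\beta)$ lies strictly to the left of $\beta$ at the endpoint on $\partial S$, so $T_\alpha^{-1} \notin Veer^+(S)$.

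The main point requiring care is the choice of the pair $(\alpha,\beta)$, and I expect to handle it by a short case analysis on the topology of $S$. If $S$ has positive genus I would take $\alpha$ non-separating with any dual arc $\beta$. If $S$ is planar with at least four boundary components I would take $\alpha$ to separate two boundary components of $S$ from the remaining ones, with $\beta$ an arc between the two sides crossing $\alpha$ once. The hypotheses $\chi(S)<0$ and ``not a pair of pants'' are exactly what is needed to exclude disk, annulus and pair of pants, in which no such $\alpha$ exists.
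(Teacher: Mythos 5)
Your proposal is correct and takes essentially the same route as the paper: the paper likewise treats the non-strict inclusions as known from \cite{HKM} and proves strictness with the witnesses $T_\gamma \in Veer^+(S) \setminus \bigcup_{r>0} FDTC_r(S)$ and $T_\gamma^{-1} \in FDTC_0(S) \setminus Veer^+(S)$, using $c(T_\gamma^{\pm 1}, C)=0$ for all boundary components $C$. One point in your favor: the paper chooses $\gamma$ non-separating, which does not exist when $S$ is planar, whereas your case analysis (a separating, non-boundary-parallel curve when $S$ is planar with at least four boundary components) covers that case, and the key vanishing $c(T_\alpha^{\pm 1}, C)=0$ indeed holds for any essential curve $\alpha$ not parallel to $C$, not only non-separating ones.
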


\section{Basic study of quasi-morphisms}

As shown in \cite[Corollary 4.17]{IK2} the FTDC map $c(-,C):\Mod(S) \rightarrow \Q$  is not a homomorphism but a quasi-morphism if the surface $S$ has  negative Euler characteristic. 
In order to prove Theorem~\ref{thm:r} we first study general quasi-morphisms and obtain a monoid criterion (Theorem~\ref{theorem:qmonoid}).

Let $G$ be a group. 
A map $q: G \rightarrow \R$ is called a \emph{quasi-morphism} if $$D(q) := \sup_{g,h \in G} |q(gh)-q(g)-q(h) |$$ is finite. The value $D(q)$ is called the \emph{defect} of the quasi-morphism. A quasi-morphism $q$ is \emph{homogeneous} if $q(g^{n})=nq(g)$ for all $g \in G$ and $n \in \Z$. Every quasi-morphism can be modified to a homogeneous quasi-morphism by taking the limit: $$\overline{q}(g):= \lim_{n \to \infty} \frac{q(g^{n})}{n}$$ 
A typical example of homogeneous quasi-morphism is the {\em translation number} $$\tau: \widetilde{\textrm{Homeo}^{+}}(S^{1}) \rightarrow \R$$ given by: 
$$\tau(g)= \lim_{n \to \infty}\frac{g^{n}(0)}{n}= \lim_{n \to \infty}\frac{g^{n}(x)-x}{n} $$ 
Here $\widetilde{\textrm{Homeo}^{+}}(S^{1})$ is the group of orientation-preserving homeomorphisms of $\R$ that are lifts of orientation-preserving homeomorphisms of $S^{1}$.  
The limit $\tau(g)$ does not depend on the choice of $x\in\R$. 
An important property of $\tau$ we will use is that  
\begin{enumerate}
\item[$(*)$]
if $0<\tau(g)$ then $x<g(x)$ for all $x \in \R$. 
\end{enumerate}
Given a quasi-morphism $q:G \rightarrow \R$ and $r \in \R$ let $$G_{r}:=\left\{g \in G \: \middle| \: g=id_G \mbox{ or } q(g)\geq r\right\}.$$ 
It is easy to see that: 
\begin{proposition}
The set $G_{r}$ forms a monoid if $r \geq D(q)$. 
\end{proposition}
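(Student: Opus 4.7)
The plan is to verify the two defining properties of a monoid for $G_r$: containment of the identity and closure under the group operation. The first is immediate from the way $G_r$ is defined, since $id_G \in G_r$ is built into the definition. So the real content is in establishing closure, and the argument there reduces to a one-line application of the defect inequality for $q$.

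The strategy for closure is to take $g, h \in G_r$ and show $gh \in G_r$. First I would dispose of the trivial cases: if either $g = id_G$ or $h = id_G$, then $gh$ equals the other element, which lies in $G_r$ by assumption. In the remaining case both $q(g) \geq r$ and $q(h) \geq r$. Now apply the defining inequality of a quasi-morphism,
\[
|q(gh) - q(g) - q(h)| \leq D(q),
\]
to obtain the lower bound
\[
q(gh) \;\geq\; q(g) + q(h) - D(q) \;\geq\; 2r - D(q).
\]
Using the hypothesis $r \geq D(q)$, this gives $q(gh) \geq 2r - D(q) \geq r$, so $gh \in G_r$ whether or not $gh$ happens to equal $id_G$.

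There is no genuine obstacle in this proof; the only subtlety worth flagging is the bookkeeping around the $g = id_G$ clause in the definition of $G_r$, which is what allows elements with $q$-value below $r$ (namely the identity, which typically satisfies $q(id_G) = 0$) to still appear in $G_r$. That clause is the reason one must split into cases before invoking the defect bound, since otherwise the statement would require $q(id_G) \geq r$, which forces $r \leq 0$ and trivializes the result. Once these cases are separated, the quasi-morphism inequality does all the work, and the proof is essentially a two-line computation.
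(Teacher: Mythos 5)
Your proof is correct: the case split on the identity clause plus the defect inequality $q(gh)\geq q(g)+q(h)-D(q)\geq 2r-D(q)\geq r$ is exactly the standard argument, and the paper itself states this proposition without proof as "easy to see," clearly having this computation in mind. Nothing to add.
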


The following Theorem~\ref{theorem:qmonoid} gives another monoid criterion for $G_{r}$. 
We will later apply Theorem~\ref{theorem:qmonoid}  to the quasi-morphism $c(-,C)$ and prove Theorem~\ref{thm:r}.

\begin{theorem}
\label{theorem:qmonoid}
Let $q:G\to \R$ be a homogeneous quasi-morphism which is a pull-back of the translation number quasi-morphism $\tau$, namely, there is a homomorphism $f:G \rightarrow \widetilde{\textrm{Homeo}^{+}}(S^{1})$ such that $q= \tau \circ f$.
Then $G_{r}$ forms a monoid for $r>0$.
\end{theorem}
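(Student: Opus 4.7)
\emph{Strategy.} Because $q=\tau\circ f$ and $f$ is a homomorphism, the monoid property for $G_{r}$ reduces to the statement: whenever $\alpha,\beta\in\widetilde{\textrm{Homeo}^{+}}(S^{1})$ satisfy $\tau(\alpha),\tau(\beta)\geq r>0$, one has $\tau(\alpha\beta)\geq r$. The naive defect bound yields only $\tau(\alpha\beta)\geq 2r-1$, which is useless for $r<1$, so the proof must exploit property $(*)$ directly rather than rely on $D(\tau)=1$.

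\emph{Pointwise domination.} The plan is to prove that for every $x\in\R$ and every integer $n\geq 1$,
$$(\alpha\beta)^{n}(x)>\beta^{n}(x),$$
which implies $\tau(\alpha\beta)\geq\tau(\beta)\geq r$ after dividing by $n$ and letting $n\to\infty$. I would argue by induction on $n$. For $n=1$, property $(*)$ applied to $\alpha$ (using $\tau(\alpha)\geq r>0$) gives $\alpha(y)>y$ for every $y\in\R$; choosing $y=\beta(x)$ yields $\alpha\beta(x)=\alpha(\beta(x))>\beta(x)$. For the inductive step, the inductive hypothesis $(\alpha\beta)^{n-1}(x)>\beta^{n-1}(x)$ combined with strict monotonicity of the lift $\alpha\beta$ gives
$$(\alpha\beta)^{n}(x)=(\alpha\beta)\bigl((\alpha\beta)^{n-1}(x)\bigr)>(\alpha\beta)\bigl(\beta^{n-1}(x)\bigr)=\alpha\bigl(\beta^{n}(x)\bigr)>\beta^{n}(x),$$
the final inequality being property $(*)$ applied at $y=\beta^{n}(x)$.

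\emph{Wrap-up and main subtlety.} Passing to the limit then yields $\tau(\alpha\beta)\geq\tau(\beta)\geq r$, hence $q(gh)\geq r$ for every $g,h\in G_{r}\setminus\{id_{G}\}$, so $gh\in G_{r}$. The remaining cases in which one of $g$, $h$, or $gh$ equals $id_{G}$ follow directly from the definition of $G_{r}$. The one subtle point is recognizing that the weak pointwise consequence $\alpha(y)>y$ of $(*)$ already suffices: once each application of $\alpha$ strictly pushes a point upward, strict monotonicity of the circle-lift composition transports this advantage through every iterate, and the bound $\tau(\alpha\beta)\geq\tau(\beta)$ follows without ever invoking the defect of $\tau$. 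This is precisely why the conclusion holds uniformly for all $r>0$, not merely for $r\geq 1$.
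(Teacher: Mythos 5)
Your proof is correct, but it takes a genuinely different route from the paper. The paper argues by contradiction and leans on the quasi-morphism structure: assuming $g,h\in G_r$ with $q(gh)<r$, it picks $n$ so large that $n\bigl(q(g)-q(gh)\bigr)>D(q)$, uses the defect bound to conclude $q\bigl(g^{-n}(gh)^n\bigr)<0$, and then contradicts this via property $(*)$ together with the pointwise comparison $(GH)^n(0)>G^n(0)$ coming from $\tau(H)>0$. You dispense with the defect entirely: from $\tau(\alpha)>0$ and property $(*)$ you get the pointwise domination $(\alpha\beta)^n(x)>\beta^n(x)$ by induction (using strict monotonicity of lifts), and dividing by $n$ gives the monotonicity statement $\tau(\alpha\beta)\geq\tau(\beta)\geq r$ directly. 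Note that the same pointwise-comparison mechanism appears in both arguments (the paper's $(GH)^n(0)>G^n(0)$ is your inequality with the roles of the two factors swapped), but the paper uses it only to manufacture a contradiction, while you use it affirmatively. What your version buys: it is more elementary and self-contained (it never invokes finiteness of $D(q)$ or homogeneity beyond the definition of $\tau$ as a limit), and it actually proves the stronger fact that $\tau(\alpha\beta)\geq\max\bigl(\tau(\alpha),\tau(\beta)\bigr)$ when both translation numbers are positive; what the paper's version buys is a template that isolates exactly where positivity of $r$ interacts with the defect, which fits the surrounding discussion of the criterion $r\geq D(q)$. Your handling of the identity cases and of the reduction via the homomorphism $f$ is fine.
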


\begin{proof}
Let $r>0$. 
Assume to the contrary that $G_{r}$ id not a monoid.  
There exist $g,h \in G_{r}$ such that $gh \not \in G_{r}$. 
That is $q(gh) < r \leq q(g), q(h)$. 
Take an integer $n>0$ so that 
\begin{equation}\label{eq1}
q(g^n)-q((gh)^n)= n(q(g) - q(gh) )>D(q).
\end{equation}
By the definition of the defect we have 
\begin{equation}\label{eq2}
\left| q(g^{-n} (gh)^n) + q(g^n)-q((gh)^n) \right| \leq D(q).
\end{equation}
By (\ref{eq1}) and (\ref{eq2}) we get 
$$q(g^{-n}(gh)^n) \leq - q(g^{n}) + q((gh)^{n}) +  D(q) < -D(q)+D(q) =0.$$
Let $G=f(g)$ and $H=f(h)$. 
By the property $(*)$ we have $(G^{-n}(GH)^{n})(0) < 0$. 

On the other hand, since $0< r \leq q(h)= \tau(H)$ by the property $(*)$  again we have  $x < H(x)$ for all $x \in \R$. Therefore $(GH)^n (0) > G^n (0)$ and 
\[(G^{-n} (GH)^n) (0) > (G^{-n} G^n) (0) = 0, \]
which is a contradiction.
\end{proof}

\section{Proofs of Theorem~\ref{thm:r} and Corollary~\ref{cor}}


\begin{proof}[Proof of Theorem~\ref{thm:r}]

According to \cite[Theorem 4.16]{IK2}, if $\chi(S)<0$ the FDTC has 
$c(\phi,C) = (\tau \circ\Theta_{C})(\phi)$ for some homomorphism $\Theta_{C} : \Mod(S) \rightarrow \widetilde{\textrm{Homeo}}^{+} (S^{1})$. 
This fact along with Theorem~\ref{theorem:qmonoid} shows that  $FDTC_{r,C}(S)$ is a monoid if $\chi(S)<0$ and $r>0$. 
Since $FDTC_{r}(S)= \bigcap_{C\subset \partial S} FDTC_{r,C}(S)$  the set $FTDC_{r}(S)$ is also a monoid if $\chi(S)<0$ and $r>0$.

Next we show that  $FDTC_{r,C}(S)$ is not a monoid for $r \leq 0$. 
For any non-separating simple closed curve $\gamma$ and any boundary component $C'$ of $S$ we have $c(T_{\gamma}^{\pm 1},C') = 0$. Therefore we have 
$
T_\gamma^{\pm 1} \in FDTC_{0}(S) \subset FDTC_{r}(S),
$
i.e., 
\begin{equation}\label{0}
T_\gamma^{\pm 1} \in FDTC_{0,C}(S) \subset FDTC_{r,C}(S).
\end{equation}

(Case 1)
Recall that for any surface of genus $g \geq 2$ the group $\Mod(S)$ is generated by Dehn twists along non-separating curves (see p.114 of \cite{FM}). 
If $FDTC_{r,C}(S)$ were a monoid then this fact and (\ref{0}) imply  that 
$FDTC_{0,C}(S)=FDTC_{r,C}(S)=\Mod(S)$ which is clearly absurd. 
Thus $FDTC_{r,C}(S)$ is not a monoid if $g\geq 2$ and $r\leq 0$.

(Case 2)
If $g=0$ and $|\partial S| = 4$ let $a, b, c, d$ be the boundary components and $x, y, z$ be the simple closed curves as shown in Figure~\ref{fig:A}-(1). 
Let $r \leq 0$ and $C \in \{a, b, c, d\}$. 
Since $x, y, z$ are non-separating $$T_x^{\pm 1}, T_y^{\pm 1}, T_z^{\pm 1}\in FDTC_{0, C}(S) \subset FDTC_{r, C}(S).$$  
By the {\em lantern relation}, for any positive integer $n$ with $-n <r$ we have   $$c((T_x T_y T_z)^{-n}, C) = c(T_a^{-n} T_b^{-n} T_c^{-n} T_d^{-n}, C)= -n$$ thus $(T_x T_y T_z)^{-n} \notin FDTC_{r, C}(S)$. 
This shows that $FDTC_{r, C}(S)$ is not a monoid for all $r\leq 0$ and $C \in\{a, b, c, d\}$.   

\begin{figure}[htbp]
\begin{center}
\includegraphics*[width=100mm]{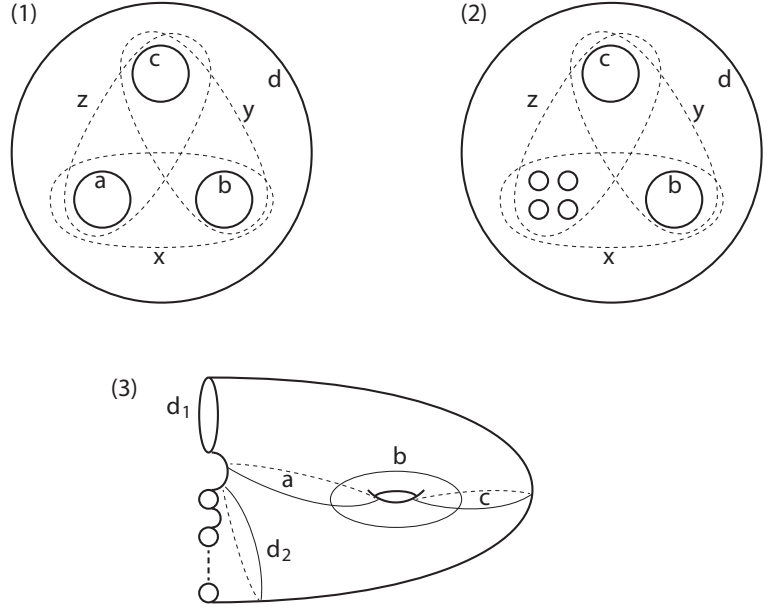}
\caption{(1) Case 2. (2) Case 3. (3) Case 5.}
\label{fig:A}
\end{center}
\end{figure}

(Case 3)
If $g=0$ and $n=|\partial S| > 4$ add $n-3$ additional boundary components $a_1,\dots, a_{n-3}$ in the place of $a$ as shown in Figure~\ref{fig:A}-(2). 
By a similar argument using the lantern relation we can show that $FDTC_{r, C}(S)$ is not a monoid for all $r\leq 0$ and any $C =b, c, d$. By the symmetry of the surface we can further show that $FDTC_{r, C}(S)$ is not a monoid for all $r\leq 0$ and $C =a_1,\dots, a_{n-3}$. 

(Case 4)
If $g=1$ and $|\partial S| =1$ 
the group $\Mod(S)$ is generated by Dehn twists about non-separating curves. Thus this case is subsumed into Case 1. 


(Case 5)
If $g=1$ and $|\partial S| \geq 2$  applying the 3-chain relation \cite[Proposition 4.12]{FM} to the curves in Figure~\ref{fig:A}-(3)
we get $c((T_a T_b T_c)^{-4n}, d_1) = c((T_{d_1})^{-n} (T_{d_2})^{-n}, d_1)= -n$. 
By the same argument as in Case 2 we can show that $FDTC_{r, d_1}(S)$ is not a monoid for all $r\leq 0$.

Parallel arguments show that $FDTC_r(S)$ does not form a monoid for $r\leq 0$.  
\end{proof}

We close the paper by proving Corollary~\ref{cor}.

\begin{proof}[Proof of Corollary~\ref{cor}] 
Let $\gamma \subset S$ be a non-separating simple closed curve.
By (\ref{0}) we observe that $$T_\gamma \in Veer^+(S) \setminus (\bigcup_{r>0} FDTC_r(S)) \mbox{ and }T_\gamma^{-1} \in FDTC_0(S) \setminus  Veer^+(S).$$
\end{proof}

\section*{Acknowledgements}
The authors thank John Etnyre for pointing out an error in an early  draft of the paper.  
TI was partially supported by JSPS Grant-in-Aid for Young Scientists (B) 15K17540.
KK was partially supported by NSF grant DMS-1206770.

\end{document}